\newcommand{\define}[1]{\textbf{#1}}
\newcommand{\VGT}{\text{VGT}}
\newtheorem{thm}{Theorem}
\newtheorem*{thm*}{Theorem}
\theoremstyle{definition}
\newtheorem{defn}{Definition}
\title{Exploring the Stratified Space Structure of an RL Game with the Volume Growth Transform}
\author{\name{Justin Curry} \email{jmcurry@albany.edu}\\
\addr University at Albany, SUNY
\AND
\name{Brennan Lagasse} \email{brennan.m.lagasse@lmco.com}\\
\addr Lockheed Martin Corporation
\AND
\name{Ngoc B. Lam} \email{ngoc.b.lam@lmco.com}\\
\addr Lockheed Martin Corporation
\AND
\name{Gregory Cox} 
\email{gecox@albany.edu}\\
\addr University at Albany, SUNY
\AND
\name{David Rosenbluth} \email{david.rosenbluth@lmco.com}\\
\addr Lockheed Martin Corporation
\AND
\name{Alberto Speranzon} \email{alberto.speranzon@lmco.com}\\
\addr Lockheed Martin Corporation
}
\begin{document}

\maketitle

\begin{abstract}
In this work, we explore the structure of the embedding space of a transformer model trained for playing a particular reinforcement learning (RL) game. 
Specifically, we investigate how a transformer-based Proximal Policy Optimization (PPO) model embeds visual inputs in a simple environment where an agent must collect ``coins'' while avoiding dynamic obstacles consisting of ``spotlights.'' 
By adapting Robinson et al.'s~\cite{robinson2024structure} study of the volume growth transform for LLMs to the RL setting, we find that the token embedding space for our visual coin collecting game is also not a manifold, and is better modeled as a stratified space, where local dimension can vary from point to point. 
We further strengthen Robinson's method by proving that fairly general volume growth curves can be realized by stratified spaces.
Finally, we carry out an analysis that suggests that 
as an RL agent acts, its latent representation alternates between periods of low local dimension, while following a fixed sub-strategy, and bursts of high local dimension,  where the agent achieves a sub-goal (e.g., collecting an object) or where the environmental complexity increases (e.g., more obstacles appear).
Consequently, our work suggests that the distribution of dimensions in a stratified latent space may provide a new geometric indicator of complexity for RL games. 
\end{abstract}

\section{Introduction}

It is a widely held belief that the success of neural networks to solve problems and understand relations between objects stems from the rich geometric structure of the latent spaces developed during training.
In this setting, the \emph{manifold hypothesis} asserts that tokenized representations of data are transformed to live on a low-dimensional manifold (the latent space) inside a high-dimensional feature space, where reasoning can occur.
However, recent work by \cite{robinson2024structure,robinson2025token} provides a strong rebuke to the manifold hypothesis in the context of large language models (LLMs), which suggests that tokens actually live on a \emph{stratified space} (see Appendix \ref{appx:sec:stratified-spaces}), where local dimension (Definition \ref{defn:local-dimension}) can vary from point to point.
In this work, we follow the methodology of \cite{robinson2025token} to interrogate the distribution of local dimensions in the first transformer block in a particular neural network architecture trained to solve a searing spotlight memory game introduced by \cite{pleines2025memory}.
In contrast to LLMs, this setting requires regarding images of game play as the fundamental (tokenized) representational unit.
Our findings indicate that the latent space structure for this particular RL game also exhibits non-manifold structure, suggesting that stratified spaces may be present in many different applications of neural networks.
It should be noted that the investigation of the dimension of latent representations of image-based data in deep learning is not new.
Indeed, \cite{popeintrinsic} calculated the so-called \emph{intrinsic dimension} of the MNIST, CIFAR-10, and ImageNet datasets to be between 26 and 43 dimensions. 
That work averaged the estimated local dimension over all points in the data set---an implicit deference to the manifold hypothesis.
By contrast, our work indicates that there are clusters of varying dimension, suggesting that such averaging procedures may be inappropriate, and gives a strong signal that the latent representation of images occurring in RL games may be stratified.
Most interestingly, we study the local dimension as a function of time---along trajectories of an RL agent during a single episode---and provide possible interpretations of fluctuation in local dimension as the agent navigates its environment.
Our analysis suggests that the latent space has higher local dimension near goal states (suggesting a correlation with reward), but also has higher local dimension near states where the RL agent appears to be unsure of how to act in states with high environmental complexity (suggesting a correlation with entropy of the learned policy). 

\section{The Volume Growth Transform}
\label{sec:methdology}

Recently, \cite{robinson2024structure} proposed a method for determining whether the token embedding space of large transformer models, such as GPT2, LLEMMA7B, and MISTARAL7B, forms a manifold. 
This approach differs substantially from other measures of manifoldness provided by \cite{TARDIS}, for example, but offers the advantage that it is fast to compute.

\begin{defn}\label{defn:VGT}
    Let $(X,d,\mu)$ be a metric measure space.
    The \define{volume growth transform} associates to each point $x\in X$ the log-log volume growth curve
    \[
    \VGT_x(s)\vcentcolon =\log \mu(B_x(r=e^s))\,.
    \]
    Here $\mu$ is a Borel measure on the space $X$ and $B_x(r)=\{y\in X \mid d(x,y)< r\}$ is the open ball of radius $r$ about $x$.
    Note that the radius is exponential in $s$.
\end{defn}

The analysis of \cite{robinson2024structure,robinson2025token} argues that certain tokens in LLMs cannot be viewed as samples from a manifold latent space by showing statistically significant departures from expected volume growth laws.
Indeed, as \cite[Equation 2]{robinson2024structure} observes---following the foundational work \cite[Theorem 3.1]{gray1974volume}---the volume of a small ball of radius $r$ around the point $x$ in a smooth $n$-manifold can be written as
\begin{align}
    v_x(r) = K r^{n} \left(1-\frac{Ric}{6(n+2)}r^2 + O(r^4)\right).
     \label{eq:vol_radium_relation}
\end{align}
Here $K$ is some scaling coefficient and $Ric$ is the local Ricci scalar curvature about $x\in \mathcal{T}$, where $\mathcal{T}\subset X$ is the embedded token space.
By taking the logarithm of both sides and considering a small $r$ approximation, we get:
\begin{align}
    \log v_x = \log K + n \log r - \frac{Ric}{6(n+2)}r^2 + O(r^4)\,.
    \label{eq:vol_radium_log_approx}
\end{align}
As we can see \eqref{eq:vol_radium_log_approx} is linear in $(\log K, n, Ric)$. 
In \cite{robinson2024structure}, the authors first estimate $\log K$ and $n$ and then recover the Ricci scalar curvature from the remaining residual. 
In this paper, we are primarily interested in estimating and interpreting the local dimension around a token. 
To this end, we set $Ric=0$ and add a subscript to $n$ to emphasize its dependence on $x$:
\begin{align}
    \log v_x = \log K + n_x \log r + O(r^4)\,. 
    \label{eq:vol_radium_log_approx_no_Ricci}
\end{align}
Note that \eqref{eq:vol_radium_log_approx_no_Ricci} gives the expected VGT for a Ricci-flat manifold of dimension $n$. 

Since we are studying the embedding of a finite collection of tokens under a transformer layer, we estimate volume simply by counting the number of tokens within radius $r$, i.e.
\begin{align}
    v_x(r) \approx M \left|\left\{y\in \mathcal{T} \mid d(x,y) \leq r\right\}\right|,
    \label{eq:vol_value}
\end{align} 
The metric $d$ used here is the Euclidean distance inherited from the ambient feature space $X\equiv\mathbb{R}^N$ that contains the embedded token space $\mathcal{T}$.
The constant $M$ is an unknown density parameter and can be combined with other constants, as discussed below.

\begin{figure}[t!]
    \centering
    \includegraphics[width=0.9\linewidth]{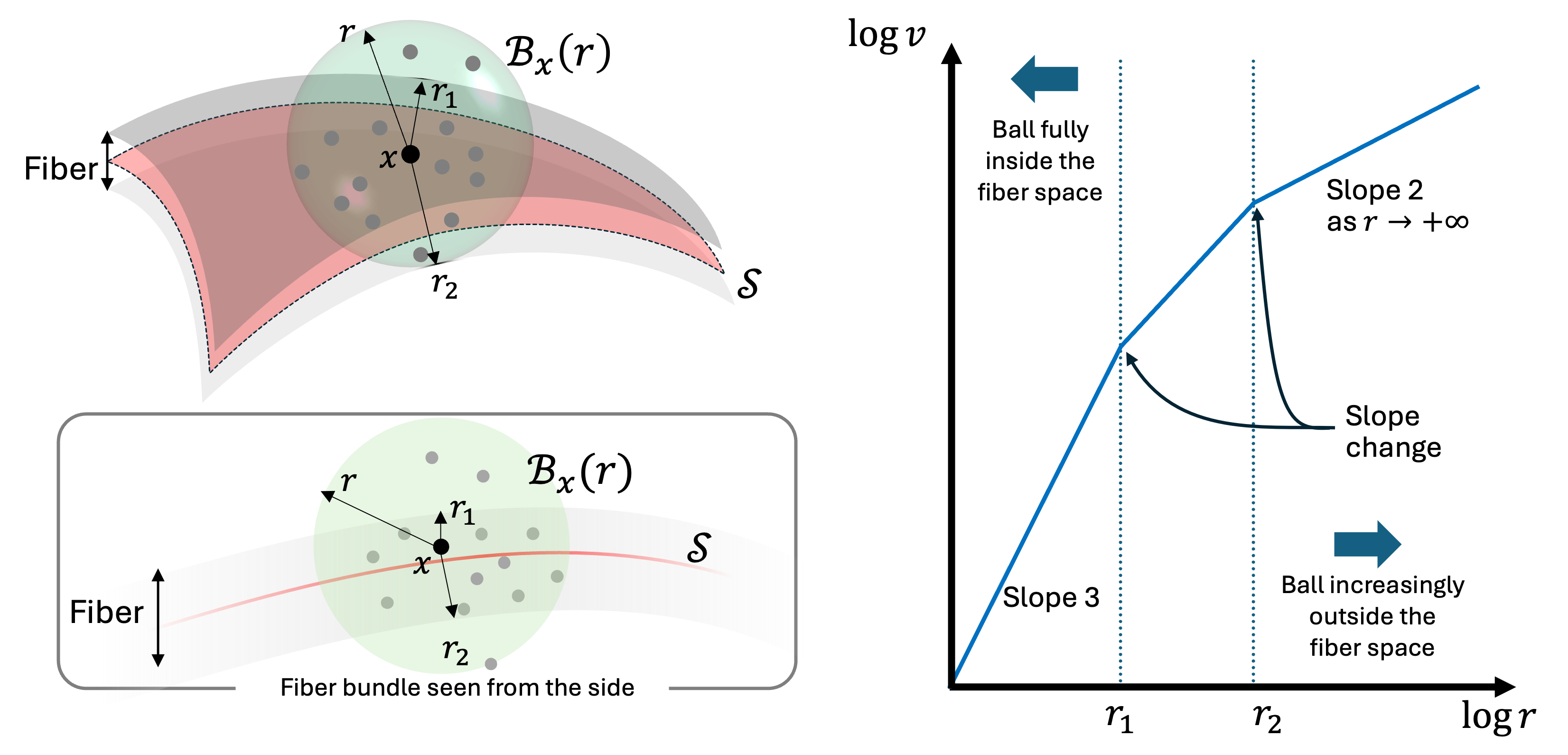}
    \caption{A cartoon example of the fiber bundle hypothesis and the corresponding piecewise-linear log-volume growth law. Above left the token space $\mathcal{T}$ is a 3-manifold with boundary, which admits a smooth retraction map $p:\mathcal{T}\to \mathcal{S}$ to a 2-manifold $\mathcal{S}$ in red. The fiber is a 1-dimensional gray interval $\mathcal{N}$, which models noise with bounded support. Growing a ball around token $j$ will lead to a piecewise linear growth curve with 3 distinct slopes: for small radii the volume will grow in accordance with a 3-manifold, then as the ball starts to ``exit'' the fiber growth will slow until it attenuates to the growth law for a 2-manifold.}
    \label{fig:fibered_space}
\end{figure}

Under the assumption that tokens are in general position around token $x$, no two tokens should appear at the exact same radius $r$.
This leads to an increasing sequence of radii $r_{1,x} < r_{2,x} < \cdots < r_{p,x}$, which indicates that there are $i$ tokens within distance $r_{i,x}$ from the token $x$. 
By substituting~\eqref{eq:vol_value} into~\eqref{eq:vol_radium_log_approx_no_Ricci}, we can estimate the local dimension $n$ by solving the following least squares problem, where we have set $\hat{H}_x \coloneqq \hat{K}_x/\hat{M}$:
\begin{align}
    \begin{pmatrix}
        0 \\
        \log 2\\
        \vdots\\
        \log p
    \end{pmatrix} \approx
    \begin{pmatrix}
        1 & \log r_{1,x}\\
        1 & \log r_{2,x}\\
        \vdots & \vdots\\
        1 & \log r_{p,x}
    \end{pmatrix}
    \begin{pmatrix}
        \log \hat{H}_x\\
        \hat{n}_x
    \end{pmatrix} + O(r^4)
    \label{eq:lsqt_problem}
\end{align}
Note that here $\hat{n}_x$ is the estimated value for the local dimension $n_x$ in~\eqref{eq:vol_radium_log_approx_no_Ricci}.
If Equation~\eqref{eq:lsqt_problem} admits a reasonable linear fit, then we have evidence that locally the latent space around token $x$ looks like a manifold and the value $\hat{n}_x$ gives the initial slope of $\VGT_x(s)$.

In \cite{robinson2025token} the authors suggest that certain classes of piecewise-linear volume growth curves are also plausible, and these can be modeled by a more general class of spaces called smooth fiber bundles.
The starting point for their analysis is that although some tokens may lie on a $m$-dimensional latent space $\mathcal{S}\subseteq \mathbb{R}^N$, there is also likely an orthogonal ``noise'' space $\mathcal{N}$ of dimension $n$ and that $m+n$ may not necessarily add up to the ambient embedding dimension $N$.
By setting a cutoff on normally distributed noise or assuming that noise has bounded support the \emph{fiber bundle hypothesis} suggests that the token space $\mathcal{T}$ is really a manifold \emph{with boundary}, which admits a smooth projection map $p:\mathcal{T}\to\mathcal{S}$, but where the fiber space $p^{-1}(s)\cong \mathcal{N}$ is independent of $s\in\mathcal{S}$.
Under this assumption, the log-volume growth about a token $x$ will exhibit decreases in slope, but with two significant regimes: small radii will show a growth rate in accordance with an $m+n$-dimensional manifold, and for large radii will attenuate to a growth law consistent with the base dimension $m$.

In summary, the methods of \cite{robinson2024structure,robinson2025token} point towards two avenues for rejecting the manifold hypothesis or the more general fiber bundle hypothesis, using the VGT:
\begin{enumerate}
    \item Study the distribution of local dimensions, defined using a specified range of radii, across all tokens $x$. If this distribution is not tightly clustered around a single integer $n$, as in Figure~\ref{fig:2coins_local_dim_hist}, then we can reject the manifold hypothesis.
    \item If for a single token $x$ we see a log-volume growth curve with a sharp \emph{increase} in growth, as in Figure~\ref{fig:2coins_vol_vs_rad} for token 4040, then this is inconsistent with the fiber bundle hypothesis and suggests that the token $x$ is lying on a lower-dimensional ``flare'' that is jutting off of a higher-dimensional bulk in the latent space.
\end{enumerate}
The most reasonable substitute for both the manifold and the fiber bundle hypothesis is the \emph{stratified space hypothesis}, which allows for a multi-modal distribution of local dimensions across tokens, as well as sharp increases in volume growth curve about single tokens. 
Analytical formulas for the volume-growth function of a stratified space are provided by generalizations of the Weyl Tube Lemma, cf.~\cite{brocker2000integral}, which allows for mixed polynomial growth for volume as a function of $r$, but the coefficients of these polynomials rely on hard-to-estimate Lipschitz-Killing measures, and are beyond the scope of this work.
Although no method currently exists for characterizing precisely the stratified space structure of a noisy point-cloud sample in terms of volume-growth laws, we do provide a realization theorem, which shows that fairly arbitrary volume growth curves can be modeled by a stratified space: 

\begin{thm}[Realization of the VGT at a Point]\label{thm:realization-of-VGTx}
    If $f:[0,\infty)\to[0,\infty)$ is a non-decreasing piecewise linear function of the form
    \begin{itemize}
        \item $f|_{[0,s_1]}=n_1\cdot s$,
        \item $f|_{[s_i,s_{i+1}]}=n_{i+1}\cdot s + f(s_i)$ for $i\in \{1,\ldots,k-1\}$, and
        \item $f|_{[s_k,\infty)}=f(s_k)$ is a constant,
    \end{itemize}
    for some finite set of ``critical scales'' $\{s_1 < \cdots < s_k\}$ and natural number slopes $n_1,\ldots, n_k\in \mathbb{N}$, then there exists a stratified space $X$ with a point $x\in X$ such that
    \[
        \VGT_x(s)=f(s).
    \]
\end{thm}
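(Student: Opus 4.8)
The plan is to reverse-engineer the space from the target volume function. Writing $r = e^s$ and $g(r)\vcentcolon= e^{f(\log r)}$, the identity $\VGT_x(s)=f(s)$ (for $s\ge 0$) is equivalent to the requirement that $\mu(B_x(r)) = g(r)$ for all $r\ge 0$, where (setting $r_i\vcentcolon= e^{s_i}$ and $r_0\vcentcolon= 0$) the function $g$ is the continuous, non-decreasing, \emph{piecewise power-law} map with $g(r)=r^{n_1}$ on $[0,r_1]$, $g(r)=g(r_i)(r/r_i)^{n_{i+1}}$ on $[r_i,r_{i+1}]$, and $g(r)=g(r_k)$ for $r\ge r_k$. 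So it suffices to exhibit a stratified metric measure space $(X,d,\mu)$ and a point $x$ for which the $\mu$-measure of the $r$-ball about $x$ is \emph{exactly} $g(r)$.

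I would build $X$ from concentric ``shells'' of the prescribed dimensions. Let $S_1\subset\mathbb{R}^{n_1}$ be the closed ball of radius $r_1$ about the origin, and for $2\le i\le k$ let $S_i\subset\mathbb{R}^{n_i}$ be the closed annulus $\{\,r_{i-1}\le |y|\le r_i\,\}$. Place these in pairwise orthogonal coordinate subspaces of $\mathbb{R}^N$ with $N=n_1+\cdots+n_k$, let $X=S_1\cup\cdots\cup S_k$ carry the restricted Euclidean metric $d$, and take $x$ to be the origin (which lies in $S_1$). The point of using the \emph{extrinsic} metric is that $d(x,y)=|y|$ for every $y\in X$, so $d(x,\cdot)\in[0,r_1]$ on $S_1$ and $d(x,\cdot)\in[r_{i-1},r_i]$ on $S_i$; hence $B_x(r)$ starts to meet $S_i$ only once $r$ passes $r_{i-1}$, and the shells ``switch on'' one at a time as $r$ crosses the critical scales. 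For the measure, equip each $S_i$ with a constant multiple $c_i\cdot\mathrm{Leb}_{n_i}$ of Lebesgue measure (and assign mass zero to all lower-dimensional strata), choosing $c_i$ so that the total mass $S_i$ contributes by radius $r_i$ equals the increment $g(r_i)-g(r_{i-1})$; explicitly $c_i=\big(g(r_i)-g(r_{i-1})\big)\big/\big(\omega_{n_i}\,(r_i^{\,n_i}-r_{i-1}^{\,n_i})\big)$, with $\omega_m$ the volume of the unit $m$-ball. A telescoping computation then shows that, for $r$ in the $i$-th regime, $\mu(B_x(r))=\sum_{j<i}\big(g(r_j)-g(r_{j-1})\big)+c_i\,\omega_{n_i}\big(r^{\,n_i}-r_{i-1}^{\,n_i}\big)=g(r_{i-1})+g(r_{i-1})\big((r/r_{i-1})^{n_i}-1\big)=g(r)$, where the partial sums telescope to $g(r_{i-1})$ and the choice of $c_i$ is precisely what turns the annular Lebesgue volume into the desired power law; for $r\ge r_k$ every shell is fully contained and $\mu(B_x(r))=g(r_k)$.

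It then remains only to note that $X$ is honestly a stratified space: each $S_i$ is a compact manifold with boundary (a ball or an annulus), hence a stratified space whose strata are its interior and its boundary sphere(s) with the standard half-space cone charts, and a finite disjoint union of stratified spaces is again one; moreover $(X,d,\mu)$ is a compact metric measure space with $\mu$ finite and Borel, and since each sphere $\{\,d(x,\cdot)=r_i\,\}$ is Lebesgue-null inside its shell the open-versus-closed-ball distinction is immaterial. If one prefers a \emph{connected} witness, one may join the shells by finitely many arcs carrying zero $\mu$-mass (new $1$-dimensional strata); because $d$ is the ambient Euclidean distance, this changes neither $\mu$ nor the function $d(x,\cdot)$.

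The step I would watch most carefully is keeping $d(x,\cdot)$ under \emph{exact} control: if one used the intrinsic length metric on $X$ instead, attaching shells or connecting arcs would introduce shortcuts that shift the critical radii, so the argument genuinely depends on working with the restricted Euclidean metric and on all auxiliary arcs being $\mu$-null. The remaining loose ends are routine edge cases --- a vanishing first interval ($s_1=0$) or a flat regime ($n_i=0$), each handled by simply dropping the corresponding shell (or, if $n_1=0$, replacing $S_1$ by a single atom of mass one) --- together with the hypothesis that the slopes $n_i$ are natural numbers, which is exactly what lets each $S_i$ be a genuine Euclidean manifold; non-integer exponents would instead force metric cones with prescribed cross-sections, which are not needed here.
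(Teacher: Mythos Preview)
Your argument is correct and is in fact more careful than the paper's own proof. The paper builds $X$ by gluing closed half-discs $X^{n_i}_{r_i}\subset\mathbb{R}^{n_i}$ zenith-to-center (so the zenith of the $i$-th piece is identified with the center of the $(i{+}1)$-st), equipped implicitly with the path metric and unscaled Lebesgue volume, and then asserts that the log-volume growth about the initial center equals $f$. Your construction instead places the pieces as concentric annular shells sitting in pairwise orthogonal coordinate planes of a single ambient $\mathbb{R}^N$, uses the \emph{extrinsic} Euclidean distance so that $d(x,\cdot)=|\cdot|$ exactly, and rescales the Lebesgue measure on each shell by a constant $c_i$.

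What your approach buys is exactness: because the distance from the basepoint is the radial coordinate and the constants $c_i$ are tuned, the ball volume is literally the piecewise power law $g(r)$, so $\VGT_x(s)=f(s)$ on the nose. In the paper's glued half-disc model, once the ball has crossed into the second piece the volume has the form $A+B\,(r-r_1)^{n_2}$, whose $\log$--$\log$ plot has slope $n_2$ only asymptotically and does not match the prescribed intercept $f(s_1)$; the paper's construction therefore realizes the qualitative slope pattern but not the exact function claimed in the theorem (and its critical radii land at $r_1,\,r_1+r_2,\ldots$ rather than $r_1,r_2,\ldots$). Conversely, what the paper's construction buys is a connected space with an intrinsic metric and the ``obvious'' volume measure, which is closer in spirit to the fiber-bundle pictures in Section~\ref{sec:methdology}; your space is disconnected by default (you patch this with $\mu$-null arcs, which is fine) and requires bespoke weights on each stratum. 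Your handling of the edge cases ($s_1=0$, $n_i=0$, and the open-vs-closed ball issue at the boundary spheres) is also more explicit than anything in the paper's proof.
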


The proof of Theorem \ref{thm:realization-of-VGTx} is deferred to Appendix \ref{appx:sec:stratified-spaces}.

\section{Experimental Setup and Numerical Results}

As stated in the introduction, the experimental work of \cite{robinson2024structure,robinson2025token} is concerned entirely with the latent space structure of open-sourced large language models.
One of the central contributions of our work is the application of their approach to the new context of RL where tokens are not textual, but rather images of the environment an agent acts on, and where the token embedding space is organized via a reward function shaped so that the agent solves a given task.
We find that many of the same patterns observed in \cite{robinson2024structure,robinson2025token} in text data are also observed in image data gathered from the \emph{Searing Spotlight} game of the Memory Gym environment developed by \cite{pleines2025memory}, which is freely available at \cite{memory_gym_github}.

For our study, a token is no longer a text symbol or a word fragment, but rather an 84\texttimes84 colored image collected during game play; see Figure \ref{fig:games}.
In the original Searing Spotlights game, a checkerboard room with a single ``coin'' is initially revealed in full light.
The lights then dim to complete darkness and the agent must navigate to the coin using memory alone, while also avoiding spotlights.
If the agent is caught in a spotlight, it then loses health for each time-step it spends in the spotlight's field of view. 
If sufficient health is lost before collecting the coin, the agent dies and the trial is unsuccessful.
The reward function, detailed in Table~\ref{table:env_params} in Appendix~\ref{appx:sec:experiments}, gives reward 1 for collecting a coin, reward -0.05 for every time-step in the spotlight, and reward -0.005 reward for every time-step. 

\begin{wrapfigure}[19]{r}{0.55\textwidth}
    \centering
    \vspace*{-3ex}
    \includegraphics[width=\linewidth]{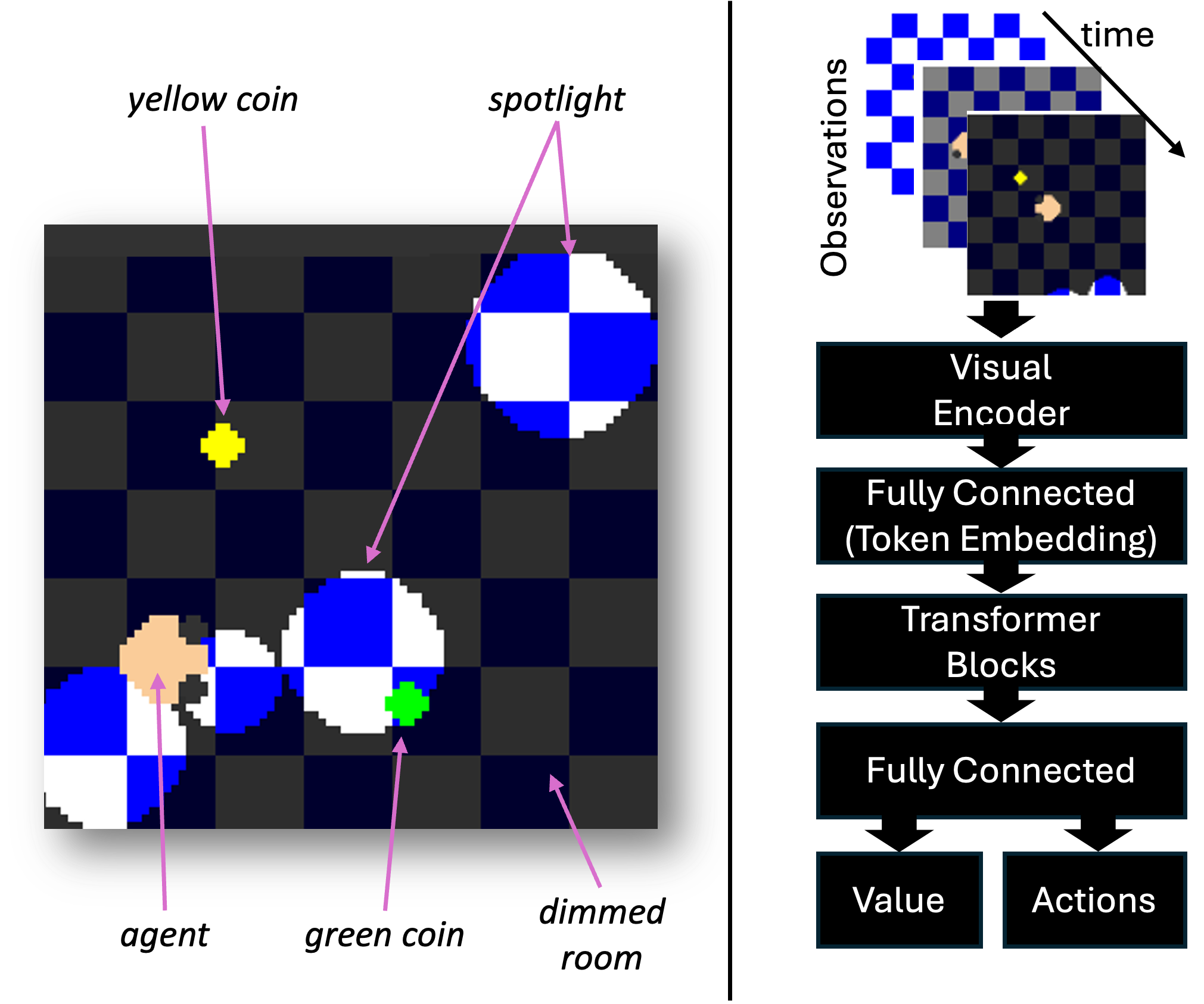}
    \vspace*{-3ex}
    \caption{(Left): The `Two-Coin' game. (Right): The network architecture used to train the agent.}
    \label{fig:games}
\end{wrapfigure} 

We modified the Searing Spotlights environment to have two coins in the room, but with different colors.
This allows us to look for patterns in the local dimension curve and try to connect signals there with repeatable actions during game play, e.g., coin collection. 
We also increased visibility of the room, so that we could more readily interpret the movement of the agent during each episode in relation to its environment, cf.~Section~\ref{subsec:loc-dim-traj}. 
Furthermore, we removed a previously visible ``health bar'' and ``action bar'' so that the visual input to our neural net architecture did not include this information.
We trained the agent using a network architecture similar to the one described in \cite{pleines2025memory}, which is based on the PPO algorithm. 
Our architecture utilizes two layers of transformer blocks, enabling the agent to learn effective policies for the given tasks. 
A sketch of the architecture is shown on the right of Figure~\ref{fig:games}. 
We simulated 250 runs with the agent starting from different initial conditions. 
On average, the agent completed the task in 18 steps, resulting in approximately 4500 observations or tokens. 
Further details about the environments, reward functions, network architecture, and learning curves, are reported in Appendix~\ref{appx:sec:experiments}.

\subsection{Local Dimension via Volume Calculations}

We computed the local dimension for each token using the least squares problem in Equation~\eqref{eq:lsqt_problem} in two different ways. The first approach involves fixing an interval of possible radii, considering the tokens within that interval, and using that information to estimate the corresponding local dimension. 
The results of this approach are shown on the left of Figure~\ref{fig:2coins_local_dim_hist} for radii between 40 and 60. 
The advantage of this approach is that it allows us to control the maximum radius, thereby better handling some of the approximations in the formulas presented in Section~\ref{sec:methdology}. 
However, in general, we would need to choose an interval of radii that is adapted to each token to balance approximation error and ensuring a sufficient number of tokens to obtain reliable estimates through the least squares problem. 
The resulting histogram shows local dimensions ranging from 6 to 45. 
Note that a subset of tokens have ``zero'' dimension, indicating that there were insufficient neighboring tokens to obtain a least squares solution. 
The second approach, followed by~\cite{robinson2024structure}, considers the data within a given interval of volume and does not bound the radius. 
In this case, the estimate of local dimension appears to be more consistent and virtually no tokens have local dimension higher than 21. 
Note that the first part of the two distributions of Figure~\ref{fig:2coins_local_dim_hist} are consistent---see the orange box for each. 
Despite the fact that the exact probability density values are different, the shape of the distribution is similar, showing a large cluster of tokens having dimension 6 to 10. 
Based on an inspection of the local dimension plots in Figure~\ref{fig:2coins_vol_vs_rad}, we selected the volume range of 50 to 90, as that indicated the most linear behavior across tokens. 
As a result of all these considerations, we use this second ``volume range'' approach to estimate local dimensions for the remainder of the paper.

\begin{figure}[t]
    \centering
    \includegraphics[width=0.95\linewidth]{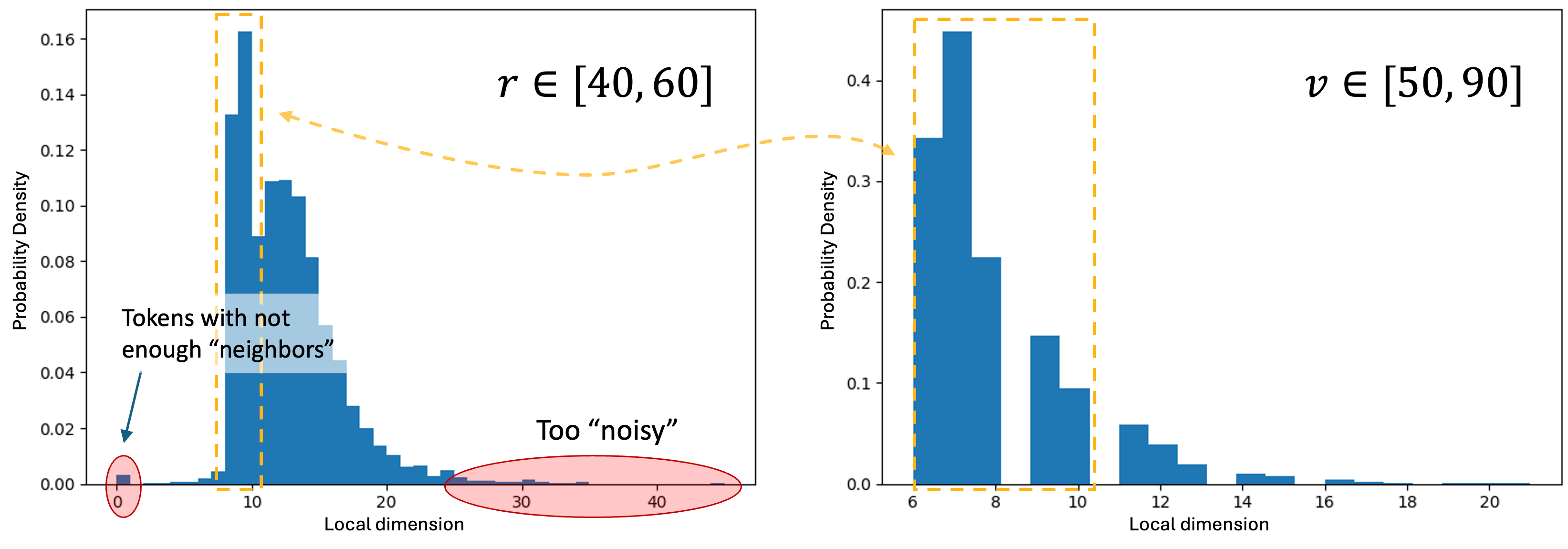}
    \caption{Local dimension probability density estimate when solving the least square problem~\eqref{eq:lsqt_problem} with the radius being bounded in the interval [40, 60] (Left) or the volume being bounded in the interval [50, 90]. Note that the behavior on the left plot is similar to the right, namely with the highest density in the range [6,10].}
    \label{fig:2coins_local_dim_hist}
\end{figure}

\begin{figure}[ht]
    \centering
    \includegraphics[width=\linewidth]{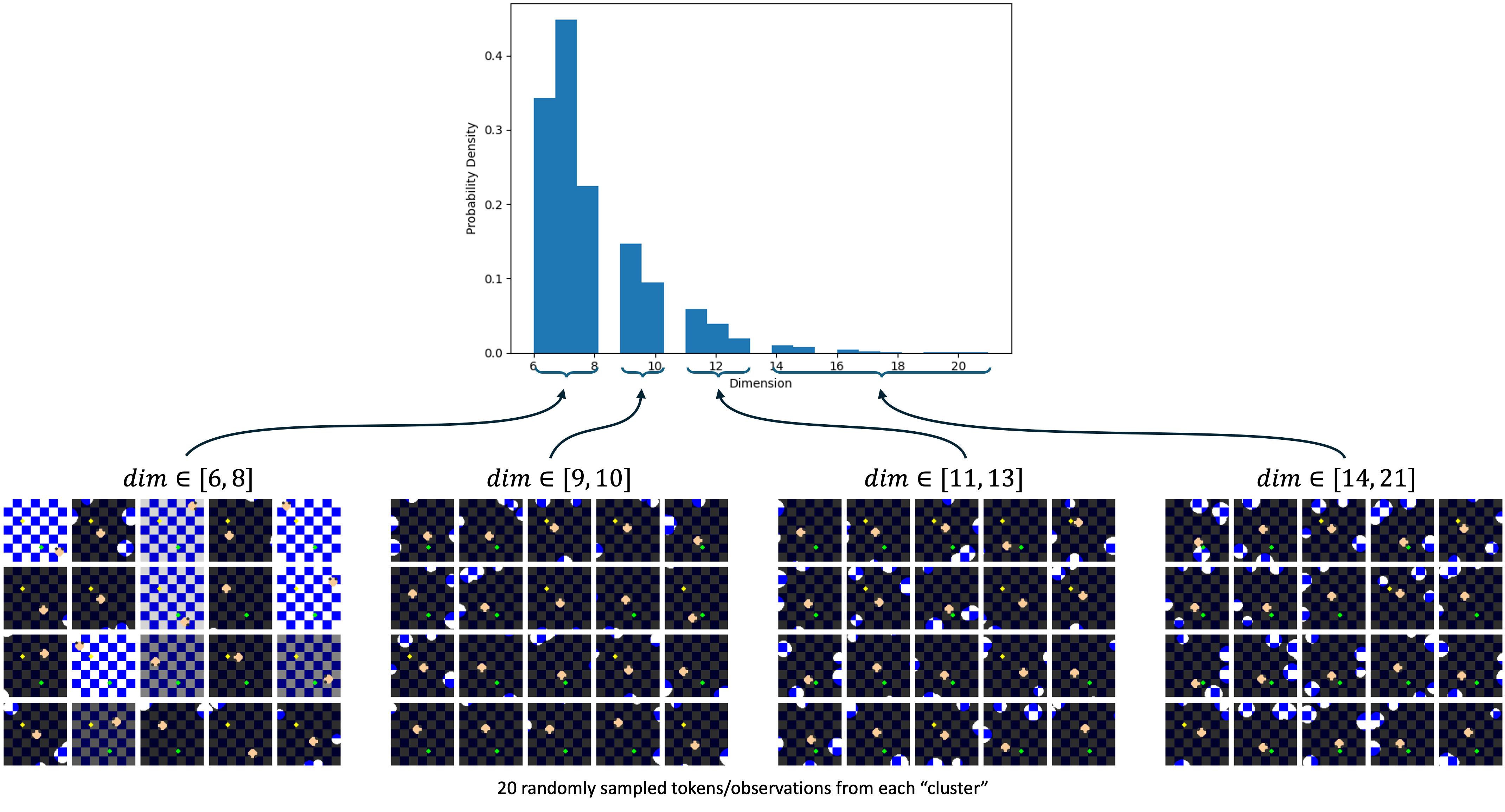}
    \caption{For each local dimension ``cluster'' we show 20 tokens (observations). Note that tokens with low dimension (6 to 10) are associated to simple situations, with not too many spotlights. Tokens with high dimension local dimension (14 to 21) are associated to situations with more spotlights, requiring more complex encoding.}
    \label{fig:2coins_loc_dim_cluster}
\end{figure}

Figure~\ref{fig:2coins_local_dim_hist} reveals four distinct clusters of tokens, with local dimensions in the range of [6,8], [9,10], [11,13], and [14,21]. 
To gain insight into the nature of these clusters, we took 20 random samples from each cluster and plotted them in Figure~\ref{fig:2coins_loc_dim_cluster}. 
An initial analysis reveals some intriguing trends. 
Notably, tokens with lower local dimension appear to be associated with the start of game play, when the lights are still on in the room, or recently after they were dimmed and not many spotlights are present.
In contrast, observations with high local dimensions are often dense with spotlights. 
This observation is intuitively plausible, as we would expect tokens representing complex situations to belong to strata with sufficient degrees of freedom to effectively capture and track the multiple spotlights. 

In Figure~\ref{fig:2coins_vol_vs_rad} we plot the volume growth curves for eight tokens, with two tokens randomly selected from each of the four clusters in Figure~\ref{fig:2coins_loc_dim_cluster}.
Several interesting trends emerge from this plot. 
For certain tokens, such as 2434, 2833, 4040, and 4105, we observe that the slopes at the corner points---where the slope changes---do not conform to the fiber bundle hypothesis outlined in Section~\ref{sec:methdology} and illustrated in Figure~\ref{fig:fibered_space}. 
Specifically, we note that after an initial corner where the slope decreases, it subsequently increases again, thereby violating the fiber bundle hypothesis and, by extension, the manifold hypothesis. 
This suggests that these tokens reside on ``flares'' of a stratified space. 
In contrast, for other tokens, such as 136, 1009, and 3758, the slope changes are much more subtle, if present at all. 
This implies that these tokens are likely to be situated in the bulk of the latent space. 
For the remaining tokens, the slope changes are more nuanced, making it challenging to argue precisely what local stratification structure is present there.

\begin{figure}[t]
    \centering
    \includegraphics[width=0.90\linewidth]{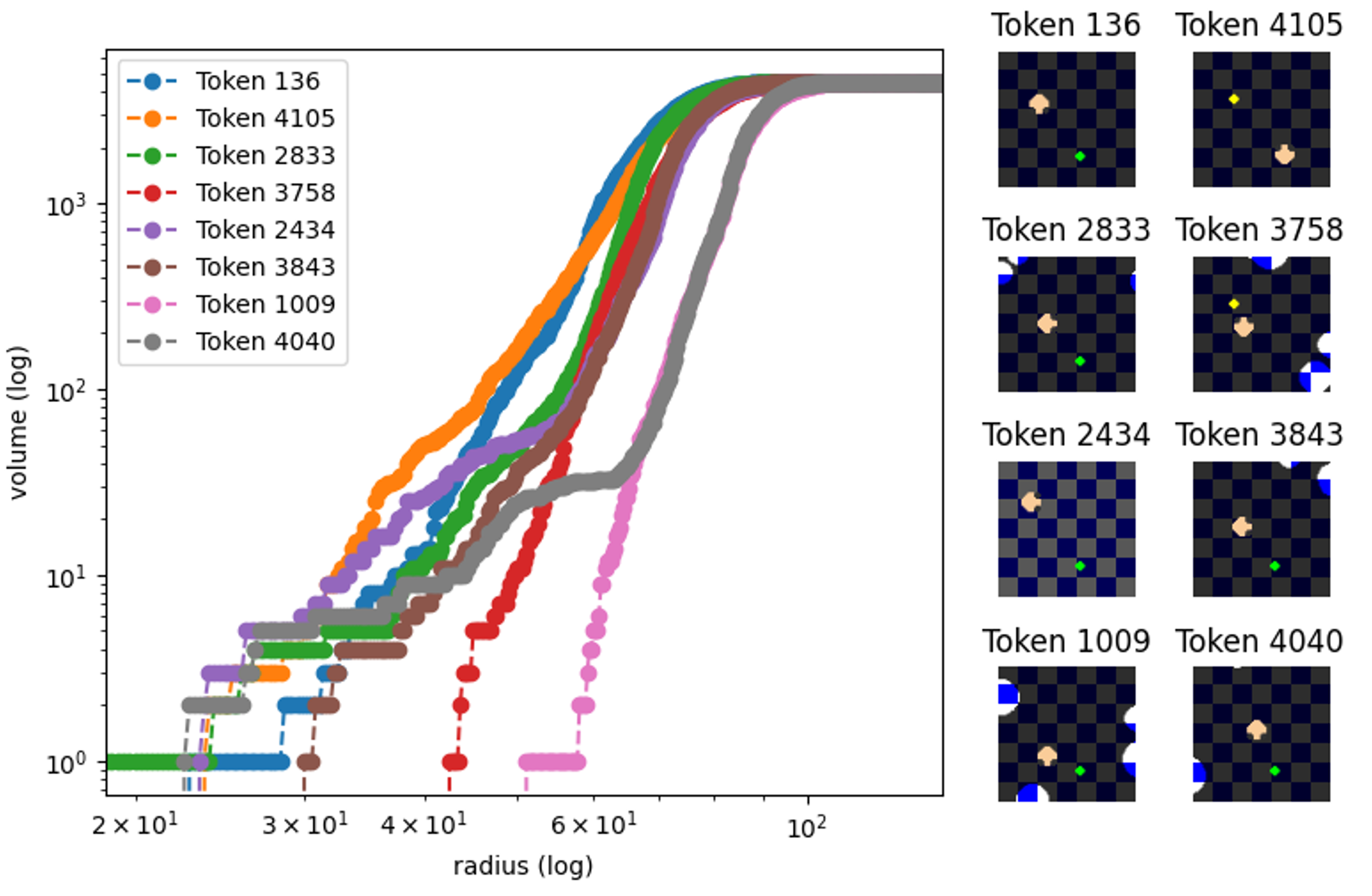}
    \caption{Volume vs radius for various tokens/observations. Note that for tokens 2434, 2833, 4040, and 4105 the slopes at the corner points, where the slope changes, do not conform to the fiber bundle hypothesis and thus with the manifold hypothesis.}
    \label{fig:2coins_vol_vs_rad}
\end{figure}

\subsection{Local Dimension Function Along a Trajectory}
\label{subsec:loc-dim-traj}

We also perform an analysis not conducted by \cite{robinson2024structure,robinson2025token}, which leverages the fact that in a RL setting we have access to full-length policies, which consist of state-action sequences. 
This allows us to examine the evolution of the local dimension along trajectories and form possible interpretations of this dimension in terms of the game play. 
The results of this analysis are presented in Figures~\ref{fig:sim1} and~\ref{fig:sim2} and discussed below. 

In both Figure~\ref{fig:sim1} and Figure~\ref{fig:sim2} we have highlighted regions where the local dimension function spikes.
We note that in both figures these spikes occur around the time of coin collection.
Additionally, in both trajectories the second spike is higher than the first.
We explain this by observing that in both simulations spotlights are starting to enter the environment, which requires a more complex modeling space for the agent to avoid them. 

However, we find that the number of options for the agent to avoid a spotlight seems to be a more significant driver of dimension than the spotlights themselves.
Indeed, in Figure~\ref{fig:sim1} we see a global maximum in dimension at time $t=12$, when a second spotlight is entering the frame at the bottom.
The agent is now torn between two options: head towards the coin or avoid the spotlight that it is headed towards?
As the agent moves right in time steps 13 and 14, we see that the agent has settled on moving towards the coin, even though it increases proximity to the bottom spotlight.
The last spike in local dimension, at $t=15$, corresponds to a frame where the agent momentarily looks west, perhaps weighing the possibility of fleeing again, before collecting the coin at $t=16$.

\begin{figure}[th]
    \centering
    \subfloat[]{\includegraphics[width=0.98\linewidth]{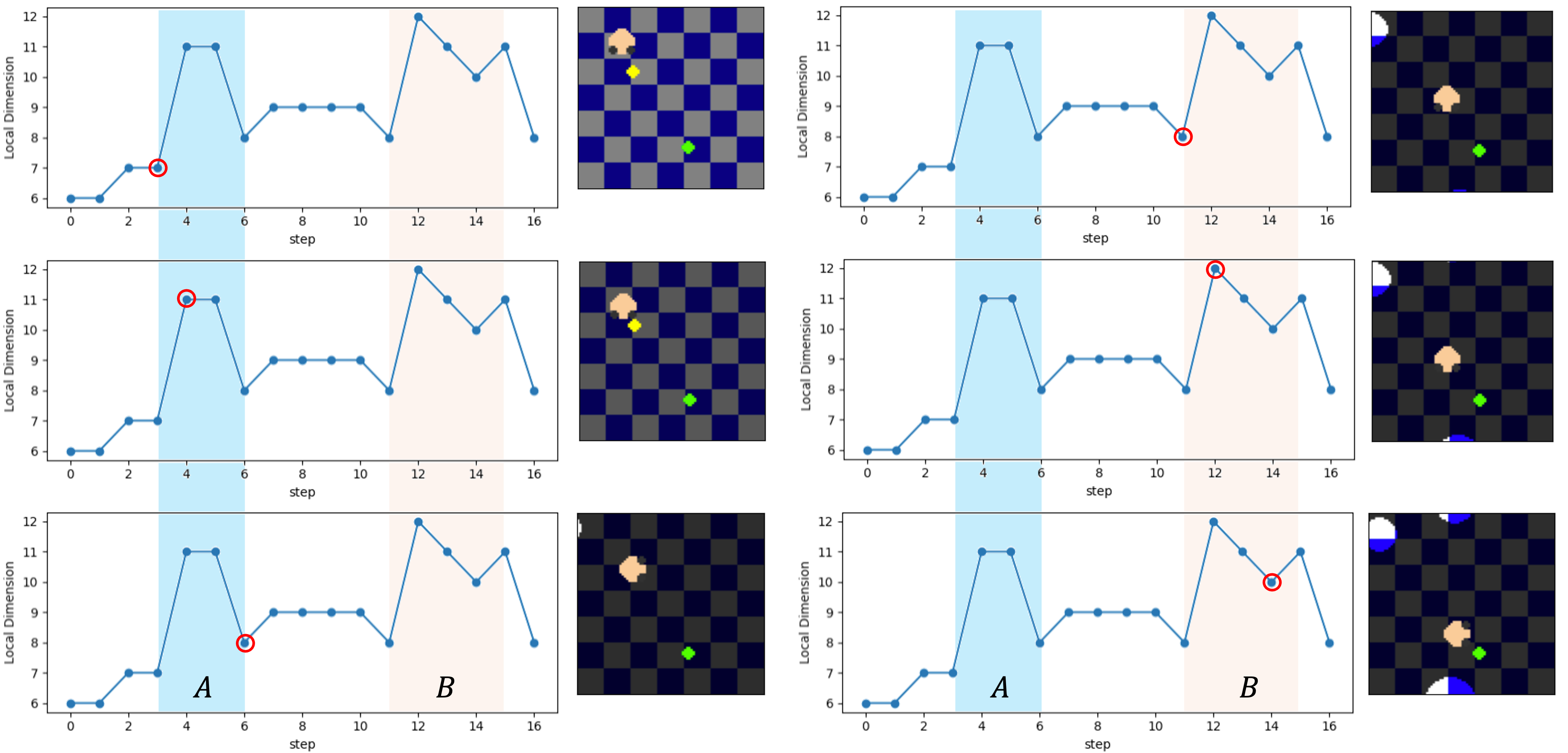}\label{fig:sim1}}\\[2ex]
    \subfloat[]{\includegraphics[width=0.98\linewidth]{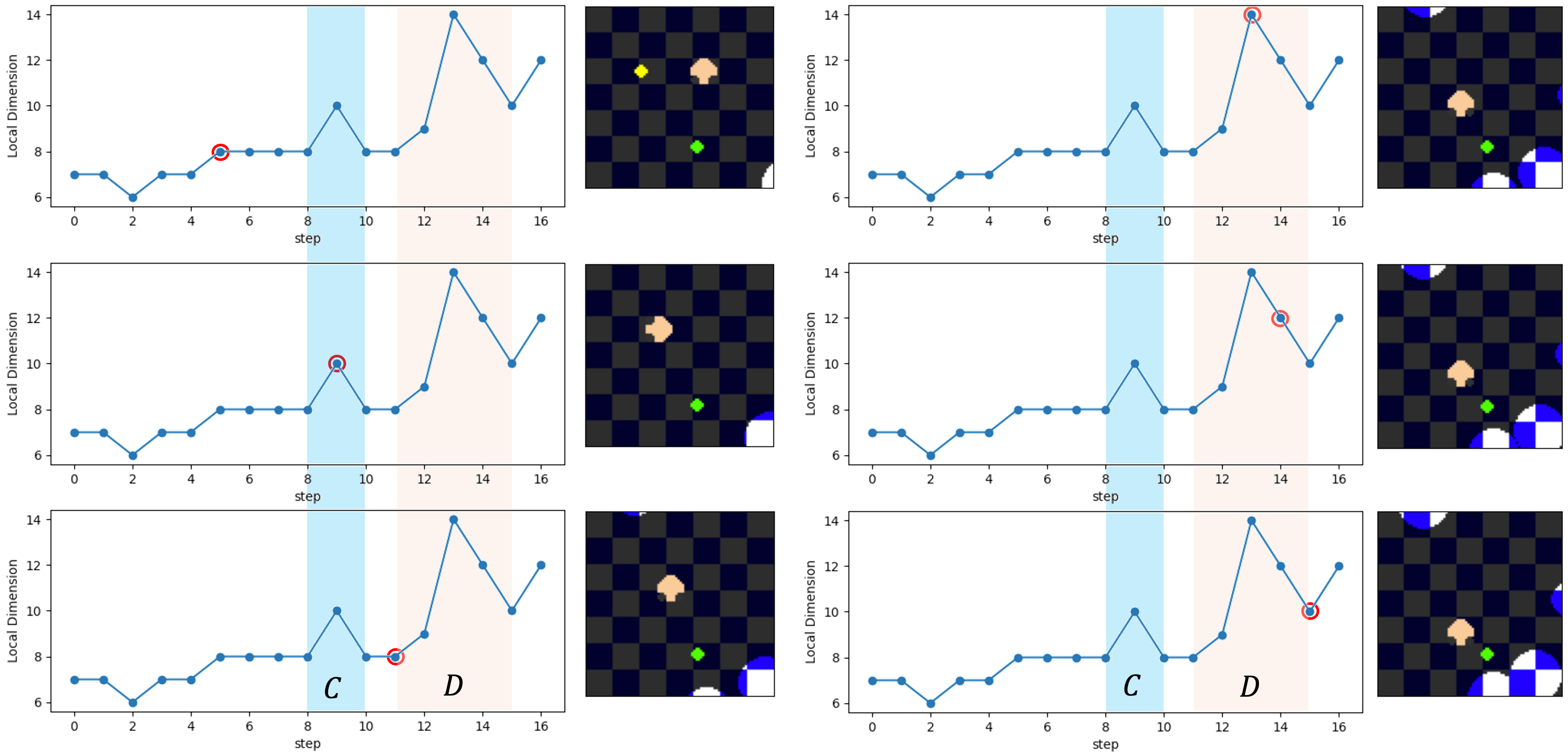}\label{fig:sim2}}
    \caption{Two simulation traces, (a) and (b), showing how the local dimension of the tokens (observations) embeddings change as the agent moves in the environment. Time is increasing from top to bottom and left to right.}
    \label{fig:2coins_sim}
\end{figure}

A similar analysis appears to hold for the trajectory in Figure~\ref{fig:sim2}.
The agent has just collected the first, yellow coin at $t=9$, just as as a spotlight from the bottom right-hand corner gains increased prominence.
The agent again is looking away from the second coin, as if deciding which policy to pursue.
As the agent commits to pursuing the green coin again, the dimension drops, despite the appearance of a second spotlight.
The maximum dimension at $t=13$ does correspond to a frame where there are four distinct spotlights, but the dimension does drop again when the bottom two spotlights merge into one, thus creating one effective spotlight.
We note that this last observation corresponds nicely with the stratified space example in Appendix~\ref{appx:sec:stratified-spaces}.

\section{Conclusion}

In conclusion, our study investigates the geometric structure of the embedding space of a transformer model trained to play a particular reinforcement learning game; a modification of the Searing Spotlights environment developed by~\cite{pleines2025memory}.
We find, in agreement with similar analyses carried out by~\cite{robinson2024structure,robinson2025token} for large language models, that the token embedding space learned by the model does not form a manifold or a fiber bundle, but rather a more complex geometric structure known as a stratified space. 
Based on our analysis of the local dimension function of the token embedding space along a trajectory, we note that spikes in the local dimension function generally occur before the agent reaches a goal state, where a coin can be collected.
We find that high-dimensional tokens generally correspond to frames during game play where there are either lots of spotlights or when the agent is appearing to weigh multiple possible courses of action---either avoiding the spotlights or pursuing the coins.
This interpretation coheres with the analysis of word embeddings presented by~\cite{jakubowski2020topology}, which suggests that words with multiple distinct meanings---a phenomenon known as \emph{polysemy}---tend to correspond to non-manifold points in these word embedding latent spaces.
We conjecture that distinct strata in the latent space correspond to different state-action trajectories, with increases in local dimension occurring when the agent is in a region where strata intersect. 
 This stratified structure may provide an automatic way to map distinct strata onto compressed symbolic representations of state-action trajectories in RL games, with relationships between strata understood in terms of syntactic properties that govern how an agent might assemble these trajectories into larger strategies.  
Finally, we note that since tokens associated with high local dimension indicate situations that the agent perceives as more complex, an adaptive training procedure could make use of this information to select training examples to help the agent resolve future complex scenarios.

% \bibliography{arXiv-version} % force bib refresh
% \bibliographystyle{tmlr}

\medskip

\printbibliography

%\newpage
\appendix

\section{Manifolds, Fiber Bundles, and Stratified Spaces}
\label{appx:sec:stratified-spaces}

We provide a brief introduction to several classes of spaces that can be used to model the latent space of a transformer. 
A subset $M\subseteq \mathbb{R}^N$ of an embedding space is a \define{$m$-dimensional manifold} if every point $p\in M$ belongs to an open set $U_p$ such that $U_p\cap M$ is homeomorphic to an open subset $V\subseteq \mathbb{R}^m$ \cite{guillemin2010differential}.
By applying a translation, one can assume that this homeomorphism $h_v:U_p\cap M \to\mathbb{R}^m$, called a \define{chart}, sends the point $p$ to the point $0\in\mathbb{R}^m$.
The invariance of dimension theorem implies that $U_p\cap M$ and $V$ both must have locally constant dimension $m$ and thus, if $M$ is a connected subset, it must have a single fixed dimension $m$; cf. page 48 of~\cite{tu2011manifolds}.

Stratified spaces, by contrast, can have points of varying local dimension. 
Historically, stratified spaces were introduced by~\cite{whitney1957elementary} to provide a convenient category of spaces for studying algebraic varieties, which are equivalently called zero sets.
A \define{zero set} is any set of the form $X=\{x\in \mathbb{R}^N \mid P(x)=0\}$ for some polynomial $P$. 
Varieties are not, in general, manifolds, but can be unions of manifolds of different dimensions. 
For a first example, the polynomial $P(x,y,z)=x^2+y^2-z^2$ has a zero set that is the union of two infinite cones glued together at the origin, which is a point that breaks the definition of a 2-manifold written above.
For another example, the polynomial $Q(x,y,z)=z(x^2+y^2)$ has a zero set that is the union of the $x,y$ plane (where $z=0$) along with the $z$ axis (where $x=y=0$).
Notice that this example has a 1-dimensional portion (the $z$ axis) as well as a 2-dimensional portion (the $x,y$ plane).
Whitney proposed a general process for decomposing varieties via a filtration
\[
\varnothing = X_{-1} \subseteq X_0 \subseteq \cdots \subseteq X_N = X
\]
where, for each $i\in \{0,\ldots,N\}$, the set $X^i:=X_i\setminus X_{i-1}$ is a disjoint union of $i$-dimensional manifolds. 
This notion turned out to be applicable for spaces more general than zero-sets of polynomials. 
For example, if $f:M\to N$ is a smooth mapping between manifolds, then one can define a similar filtration of $M$ using the rank of the derivative $df$. A case where this filtration is largely trivial is given by \define{fiber bundles}, which is any surjective smooth map $f:M \to N$ between manifolds where the derivative $df_p$ has constant full rank.
The Pre-Image Theorem (or Fiber Lemma) asserts that $f^{-1}(q)$, which is called the \define{fiber} over $q\in N$, is always an $(m-n)$-dimensional manifold, cf.~page 21 of~\cite{guillemin2010differential}.

The study of stratified spaces has evolved substantially since the 1950s, with many competing variations and generalizations---see~\cite{friedman2020singular} for a modern survey. 
Our own analysis of the stratified space structure of RL games is more natural from the poset stratification perspective, which first appeared in \cite{lurie17higher} and has been studied in more detail by \cite{ayala2017local} and \cite{waas2024stratifications}, among others.
In the language of \cite{lurie17higher}, a \define{$\mathcal{P}$-stratified space} is any topological space $X$ along with a continuous map $\sigma: X \to (\mathcal{P},\leq)$ where $\mathcal{P}$ is a poset equipped with the Alexandrov topology, that is, the topology where down-sets\footnote{A down-set $D\subseteq (\mathcal{P},\leq)$ is a subset where $y\in D$ and $x\leq y$ implies $x\in D$.} are closed.
We note that when $(\mathcal{P},\leq)=\mathbf{[d]}:=0\leq 1\cdots\leq d$ is the totally ordered set with $d+1$ elements, then one obtains the filtration of $X$ considered above.
For a more suggestive example that connects with our Searing Spotlights environment, consider the configuration space of $k$ distinct points in the plane, $\mathbb{R}^2$.
The topological space that models all possible locations of these $k$ points is $\mathbb{R}^{2k}$, but this space admits a $\mathbf{[k]}$-stratification $\sigma:\mathbb{R}^{2k}\to \mathbf{[k]}$ by counting the number of points with distinct locations.
For example, if all $k$ points are ``piled up'' at the origin, which is encoded by the vector $[(0,0),\ldots,(0,0)]\in \mathbb{R}^{2k}$, then this configuration would map to the element $1\in \mathbf{[k]}$ via $\sigma$.
Alternatively, configurations where all $k$ points (or spotlights) have distinct locations would belong to the highest-dimensional stratum, i.e., $\sigma^{-1}(k)=\mathrm{Conf}_k(\mathbb{R}^2)\subseteq \mathbb{R}^{2k}$, which is the ordered configuration space of $k$ points.

Strictly speaking, poset stratified spaces need not have fibers that are manifolds, but this can be achieved by asking for an extra \define{locally $C^0$-stratified} condition.
Following a combination of Section 2 of \cite{ayala2017local} and
Definition A.5.5 of \cite{lurie17higher}, one typically asks that a $\mathcal{P}$-stratified space have the additional property that each $p\in \mathcal{P}$ and $x\in \sigma^{-1}(p)$ has a corresponding compact $\mathcal{P}_{>a}$-stratified space $Y$ and a filtration preserving embedding $\mathbb{R}^i\times C(Y)\hookrightarrow X$ whose image $U_x$ contains $x$. 
Here $C(Y)$ corresponds to the cone on $Y$, which promotes the $\mathcal{P}_{>a}$-stratified space $Y$ to the $P_{\geq a}$-stratified space $C(Y)$ where the cone point maps to the element $a\in \mathcal{P}$.
This is all cleanly expressed with the following commutative diagram, where the dimension of $\mathbb{R}^i$ can vary as a function of $x$:
\[
\begin{tikzcd}
 \mathbb{R}^i\times C(Y) \arrow[r, hookrightarrow, "h_x"] \arrow[d] & X \arrow[d, "\sigma"] \\
 \mathcal{P}_{\geq p} \arrow[r, hookrightarrow] & \mathcal{P}
\end{tikzcd}
\]
Recent work \cite{nocera2023whitney} proves that every Whitney stratified space is a conically smooth stratified space in the sense of \cite{ayala2017local}, so this new theory includes the classical notion of stratification introduced by Whitney.

The dimension $i$ of the stratum containing $x$ in the diagram above is best thought of as the \define{intrinsic dimension} of the containing stratum, and is not what is typically meant by \emph{local} dimension. 
We now provide a precise definition of this concept, following \cite{hironaka1986local} and \cite[\S 2.4]{ayala2017local}

\begin{defn}\label{defn:local-dimension}
    Suppose $\sigma: X \to \mathcal{P}$ is a $C^0$ stratified space where $\mathcal{P}$ is a finite poset.
    The \define{local dimension} at a point $x\in X$ is the maximum (covering) dimension of any connected stratum $S \subseteq X^p:=\sigma^{-1}(p)$ such that $x\in \overline{S}$ is in the closure of $S$.
\end{defn}

Finally, we re-state and prove Theorem \ref{thm:realization-of-VGTx}.

\begin{thm*}[Realization of the VGT at a Point]\label{thm:VGT-appendix}
    If $f:[0,\infty)\to[0,\infty)$ is a non-decreasing piecewise linear function of the form
    \begin{itemize}
        \item $f|_{[0,s_1]}=n_1\cdot s$,
        \item $f|_{[s_i,s_{i+1}]}=n_{i+1}\cdot s + f(s_i)$ for $i\in \{1,\ldots,k-1\}$, and
        \item $f|_{[s_k,\infty)}=f(s_k)$ is a constant,
    \end{itemize}
    for some finite set of ``critical scales'' $\{s_1 < \cdots < s_k\}$ and natural number slopes $n_1,\ldots, n_k\in \mathbb{N}$, then there exists a stratified space $X$ with a point $x\in X$ such that
    \[
        \VGT_x(s)=f(s).
    \]    
\end{thm*}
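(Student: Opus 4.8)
The plan is to realize $f$ as the log-log volume growth at a point of a disjoint union of Euclidean disks and annular shells, each living in its own coordinate block of a large $\mathbb{R}^N$ and carrying a suitably rescaled Hausdorff measure. First I would set up notation: assume (by merging or deleting zero-slope segments) that each $n_i\ge 1$; put $s_0:=0$, $R_i:=e^{s_i}$ for $0\le i\le k$, and $V_i:=e^{f(s_{i-1})}$, so $V_1=1$ and on $[s_{i-1},s_i]$ one has $f(s)=f(s_{i-1})+n_i(s-s_{i-1})$, i.e. $e^{f(s)}=V_i(e^s/R_{i-1})^{n_i}$. Then I would take $x$ to be the origin of $\mathbb{R}^N$ with $N=n_1+\cdots+n_k$ and set $X=P_1\sqcup\cdots\sqcup P_k$, where $P_1$ is the closed $n_1$-disk of radius $R_1$ about $x$ inside the first coordinate $n_1$-plane, and for $i\ge 2$, $P_i$ is the closed $n_i$-annulus $\{\,R_{i-1}\le |y|\le R_i\,\}$ inside the $i$-th coordinate $n_i$-plane. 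Distinct coordinate blocks meet only at $x$, and no $P_i$ with $i\ge 2$ contains $x$, so the pieces are genuinely pairwise disjoint; moreover the ambient Euclidean ball $B_x(r)$ meets each $P_i$ exactly in the concentric sub-disk or sub-shell of outer radius $r$.

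Next I would equip $X$ with the ambient Euclidean metric and the Borel measure $\mu=\sum_i c_i\,\mathcal{H}^{n_i}|_{P_i}$, with normalizing constants $c_1=1/\omega_{n_1}$ and $c_i=V_i/(\omega_{n_i}R_{i-1}^{n_i})$ for $i\ge 2$, where $\omega_m$ is the volume of the unit $m$-ball. The role of these constants is a telescoping identity: the full $\mu$-mass of $P_1$ is $c_1\omega_{n_1}R_1^{n_1}=R_1^{n_1}=V_2$, while for $2\le j\le k$ the full $\mu$-mass of $P_j$ is $c_j\omega_{n_j}(R_j^{n_j}-R_{j-1}^{n_j})=V_j((R_j/R_{j-1})^{n_j}-1)=V_{j+1}-V_j$, using $V_j e^{n_j(s_j-s_{j-1})}=e^{f(s_j)}=V_{j+1}$. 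Consequently $\mu(P_1\cup\cdots\cup P_{i-1})=V_2+\sum_{j=2}^{i-1}(V_{j+1}-V_j)=V_i$.

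The verification then splits into three regimes. For $s\in[0,s_1]$ only $P_1$ meets $B_x(e^s)$, giving $\mu(B_x(e^s))=c_1\omega_{n_1}(e^s)^{n_1}=e^{n_1 s}=e^{f(s)}$. For $s\in[s_{i-1},s_i]$ with $i\ge 2$, the ball contains all of $P_1,\dots,P_{i-1}$ (mass $V_i$) and meets $P_i$ in the shell of outer radius $e^s$ (mass $c_i\omega_{n_i}((e^s)^{n_i}-R_{i-1}^{n_i})=V_i((e^s/R_{i-1})^{n_i}-1)$), so the total is $V_i(e^s/R_{i-1})^{n_i}=e^{f(s)}$. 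For $s\ge s_k$ the ball contains all of $X$, with $\mu(X)=V_{k+1}=e^{f(s_k)}=e^{f(s)}$. Taking logarithms yields $\VGT_x(s)=f(s)$ for all $s\ge 0$. It remains to observe that $X$, being a finite disjoint union of manifolds with boundary, is a conically smooth (locally $C^0$) stratified space over the poset $\mathbf{[d]}$ with $d=\max_i n_i$: the dimension map is the stratification, interior points admit charts $\mathbb{R}^{n_i}\times C(\varnothing)$ and boundary points charts $\mathbb{R}^{n_i-1}\times C(\mathrm{pt})$; note also that the local dimension at $x$ in the sense of Definition~\ref{defn:local-dimension} equals $n_1$, matching the initial slope of $f$.

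I expect the only genuinely non-routine points are the constant bookkeeping — tuning the $c_i$ so the shell contributions telescope \emph{exactly} to $e^{f(s)}$ rather than merely up to an additive constant — and checking that this (disconnected) union of disks and annuli satisfies the paper's precise stratified-space definition; if a connected example is desired, one can join consecutive pieces by arcs assigned $\mu$-measure zero, which affects neither the stratified structure near $x$ nor $\VGT_x$.
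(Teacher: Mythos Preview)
Your construction is correct and achieves the exact equality $\VGT_x(s)=f(s)$, but it is a genuinely different construction from the paper's. The paper builds a \emph{connected} space by gluing closed half-disks $X^{n_i}_{r_i}=\mathbb{D}^{n_i}_{r_i}\cap\mathbb{H}^{n_i}$ end to end---identifying the zenith $z^{n_i}_{r_i}$ of each with the center $c^{n_{i+1}}$ of the next---equips the result with its intrinsic path metric and (implicitly) the standard Hausdorff measure, and asserts that the log-volume growth about the first center is $f$. You instead place disjoint annular shells in orthogonal coordinate blocks of a single $\mathbb{R}^N$, use the restricted ambient Euclidean metric, and introduce rescaled Hausdorff densities $c_i$ tuned so the shell masses telescope. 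What your approach buys is exactness: the normalization $c_1=1/\omega_{n_1}$ forces $\mu(B_x(1))=1$, and the choice of $c_i$ makes each regime literally affine in $s$; by contrast, in the paper's half-disk chain with the standard measure one has $\VGT_x(0)=\log(\tfrac12\omega_{n_1})$ rather than $0$, and for $r\in[r_1,r_2]$ the volume is $\tfrac12\omega_{n_1}r_1^{n_1}+\tfrac12\omega_{n_2}(r-r_1)^{n_2}$, whose logarithm is not linear in $s=\log r$, so that construction recovers the prescribed slopes only in an asymptotic or qualitative sense. What the paper's construction buys is a connected space with an unweighted measure, which sits more naturally alongside the stratified examples discussed in the appendix; your closing remark about joining pieces by measure-zero arcs is a clean way to recover that feature without disturbing $\VGT_x$.
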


\begin{proof}
    Let $X^{n}_{r}=\mathbb{D}^{n}_r \cap \mathbb{H}^n$ denote the intersection of the closed disc of radius $r$ in dimension $n$ with the upper half space $\mathbb{H}^n:=\{x\in \mathbb{R}^n \mid x_n\geq 0\}$.
    Let $z^n_{r}=(0,\ldots,r)\in X^n_r$ denote the ``zenith'' of the closed half-disc of radius $r$ and let $c^{n}=(0,\ldots,0)\in X^n_r$ denote the ``center'' (zero vector) of the half-disc.
    Setting $r_i=e^{s_i}$, then the space
    \[
    X:= X^{n_1}_{r_1} \underset{\sim}{\sqcup} X^{n_2}_{r_2} \underset{\sim}{\sqcup} \cdots  \underset{\sim}{\sqcup} X^{n_k}_{r_k}
    \] 
    where the zenith $z^{n_i}_{r_i}$ is identified with the center $c^{n_{i+1}}$ via the equivalence relation $\sim$.
    The log-volume growth curve for a ball centered around $c^{n_1}$ will produce the function $f$.

    It is routine to check that this space is stratified.
    Following Whitney's version of stratified space we place the collection of equivalence classes $[z^{n_i}_{r_i}]=[c^{n_{i+1}}]$ in the set of zero-dimensional strata. 
    The boundary of each half-disc with these points removed $(\partial X^{n_i}_{r_i})-z^{n_i}_{r_i}$ are naturally connected $(n_i-1)$-dimensional strata. 
    Finally, the interior $\mathrm{int}(X^{n_i}_{r_i})$ will be pure connected $n_i$-dimensional strata as well.
\end{proof}

\section{Experiment Setup}
\label{appx:sec:experiments}

\subsection{Model}
We used the Transformer-XL-based PPO model implemented in \cite{memory_gym_github} to train agents for the Searing Spotlights environments outlined below. At each step interacting with an environment, the model receives a 84x84 RGB grid as an input. This input is embedded by a CNN as a 256 dimensional vector which is then passed through a Transformer-XL and policy/value heads. The model architecture is visualized in Figure \ref{fig:model_architecture}. The hyperparameters for the model used across experiments are listed in Table \ref{table:model_params}.

\begin{figure}
    \centering
    \includegraphics[width=0.65\textwidth]{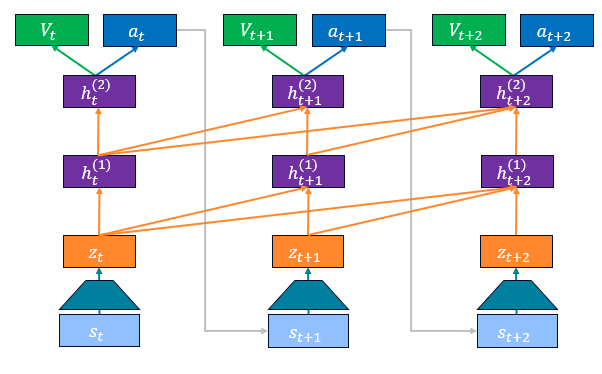}
    \caption{Illustration of the Transformer-XL-based PPO model implemented in \cite{memory_gym_github}. State $s_t$ is the visual observation of the agent. Embedding $z_t$ is computed from state $s_t$ using a CNN (cyan). Hidden states $h^{(i)}_{t}$ are computed by attending over hidden states from the previous layer as outlined in \cite{dai2019transformerxlattentivelanguagemodels} (depicted by the orange arrows). The final hidden state is the input for the value and policy heads. Training is performed with the PPO loss function.}
    \label{fig:model_architecture}
\end{figure}

\begin{table}[ht]
    \centering
    \begin{tabular}{p{3in}l}
    \toprule
    \textbf{Parameter} & \textbf{Value} \\
    \midrule
    Number of TrXL Blocks & 2 \\
    Embedding Dimension & 256 \\
    Number of Attention Heads & 4 \\
    Memory Window Length & 16 \\
    Block weight initialization & Xavier \\
    \bottomrule
    \end{tabular}
    \caption{Model parameters.}
    \label{table:model_params}
\end{table}

\subsection{Environment: The Two-Coins Game}

In the two-coins environment, the agent needs to collect two coins while evading spotlights that travel across the screen. The game ends when the agent collects both coins, runs out of time, or loses all health from contact with the spotlights. The agent is rewarded for collecting coins, and penalized for taking damage and/or taking too long to collect the coins. The agent spawns in a random spot on the grid and the coins spawn at the same location each episode. The parameters for the environment, as introduced in \cite{pleines2025memory}, are outlined in Table \ref{table:env_params} and the hyperparameters for the model trained on this environment are outlined in Table \ref{table:model_params}. The success rate of the agent over the course of training is shown in Figure \ref{fig:train_results}.

\begin{figure}
    \centering
    \includegraphics[width=0.55\textwidth]{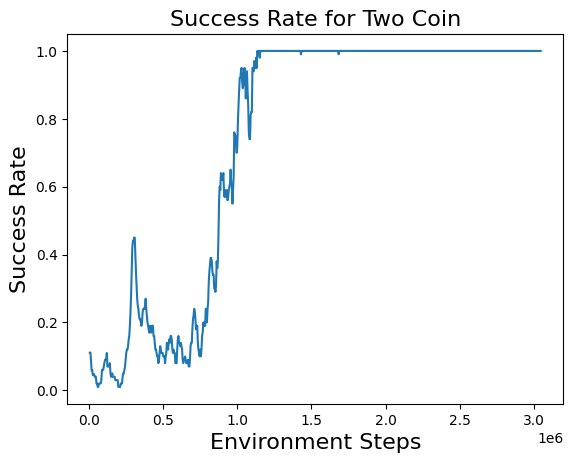}
    \caption{PPO-TransformerXL results on Searing Spotlights Environment. Success corresponds with the agent collecting both coins.}
    \label{fig:train_results}
\end{figure}

\begin{table}
    \centering
    \begin{tabular}{p{3in}l}%l}
        \toprule
        \textbf{Parameter} & \textbf{Two-Coins Env}\\ % & \textbf{Two-Keys Door Env} \\
        \midrule
        Max Episode Length           & 400           \\% & 256             \\
        Agent Scale                  & 0.25          \\% & 0.25            \\
        Agent Speed                  & 5             \\% & 5               \\
        Agent Always Visible         & True          \\% & True            \\
        Agent Health                 & 10            \\% & 10              \\
        Sample Agent Position        & True          \\% & True            \\
        Coin Scale                   & 0.375         \\% & 0.375           \\
        Coin Show Duration           & n/a           \\% & n/a             \\
        Coin Always Visible          & True          \\% & True            \\
        Steps per Coin               & n/a           \\% & n/a             \\
        No. Initial Spotlight Spawns & 3             \\% & 3               \\
        Spotlight Spawn Interval     & 30            \\% & 30              \\
        Spotlight Radius             & (7.5, 13.75)  \\% & (7.5, 13.75)    \\
        Spotlight Speed              & (0.008, 0.010)\\% & (0.008, 0.010)  \\
        Spotlight Damage             & 0.05          \\% & 0.05            \\
        Light Dim Off Duration       & 6             \\% & 6               \\
        Light Threshold              & 255           \\% & 255             \\
        Show Visual Feedback         & False         \\% & False           \\
        Render Background Black      & False         \\% & False           \\
        Hide Checkered Background    & False         \\% & False           \\
        Show Last Action             & False         \\% & False           \\
        Show Last Positive Reward    & False         \\% & False           \\
        Reward Inside Spotlight      & -0.05         \\% & -0.05           \\
        Reward Outside Spotlight     & 0             \\% & 0               \\
        Reward Death                 & 0             \\% & 0               \\
        Reward Coin                  & 1             \\% & 0.1             \\
        Reward Exit                  & n/a           \\% & 1               \\   
        Reward Per Step              & -0.005        \\% & -0.005          \\
        \bottomrule
    \end{tabular}
    \caption{Environment parameters.}
\label{table:env_params}
\end{table}

\end{document}